\documentclass[preprint,3p]{elsarticle}

\usepackage[T1]{fontenc}        
\usepackage{lmodern}            
\usepackage{anyfontsize}        
\usepackage{microtype}          
\AtBeginDocument{%
  \fontsize{8.5}{9.6}\selectfont 
  \linespread{0.95}\selectfont   
}

\usepackage{setspace}
\usepackage{enumitem}
\setlist{nosep, leftmargin=*}   

\usepackage{titlesec}
\titlespacing*{\section}{0pt}{6pt}{4pt}
\titlespacing*{\subsection}{0pt}{5pt}{3pt}

\usepackage[font=small,skip=1pt]{caption}

\usepackage{etoolbox}
\apptocmd{\thebibliography}{\setlength{\itemsep}{0pt}\small}{}{}

\usepackage{eurosym}
\usepackage{amsfonts}
\usepackage{amsmath}
\usepackage{amsthm}
\usepackage{amssymb}
\usepackage{mathrsfs}
\usepackage{xcolor} 
\usepackage[colorlinks=true,
linkcolor=magenta,
citecolor=cyan,
  urlcolor=orange
]{hyperref}
\usepackage{graphicx}
\usepackage[tableposition=top]{caption}
\usepackage{subcaption}
\usepackage{float}
\usepackage{color}
\usepackage{setspace}
\usepackage{placeins}
\usepackage{mathrsfs}
\usepackage{enumitem}
\usepackage{empheq}
\usepackage[utf8]{inputenc}
\newsavebox \foobox
\newlength{\foodim}

\graphicspath{{C:/Figures/}}
\newtheorem{theorem}{Theorem}

\newtheorem{definition}{Definition}

\newtheorem{lemma}{Lemma}

\newtheorem{proposition}{Proposition}
\newtheorem{remark}{Remark}

\numberwithin{equation}{section}

\begin{document}

\begin{frontmatter}

\title{Global Solutions of Coupled Nonlocal Parabolic Systems Arising from Reversible Chemical Reactions}

\author{Redouane Douaifia$^{a}$, Salem Abdelmalek$^{b}$, Mokhtar Kirane$^{c}$}
\address{(a) National Higher School of Telecommunications and Information and Communication Technologies -- Abdelhafid Boussouf, Oran, Algeria
\\
(b) Department of Mathematics, Laboratory (LAMIS), Echahid Cheikh Larbi Tebessi University, Tebessa, Algeria
\\
(c) Department of Mathematics, College of Computing and Mathematical Sciences, Khalifa University, P.O. Box: 127788, Abu
Dhabi, UAE
\\ 
Emails: redouane.douaifia@ensttic.dz; salem.abdelmalek@univ-tebessa.dz; mokhtar.kirane@ku.ac.ae

}
\begin{abstract}
A class of coupled time-space fractional reaction-diffusion systems derived from reversible chemical reactions over a bounded domain is investigated. Employing mainly an appropriate Lyapunov functional and an improved maximum principle, we demonstrate the global-in-time existence of strong solutions under some assumptions on the systems’ parameters.
\end{abstract}
\begin{keyword}
Global existence, reaction-diffusion, fractional Laplacians, time-fractional Caputo derivative.
\MSC[2010]  35K58 \sep 35B50 \sep 26A33 \sep 60G22.
\end{keyword}
\end{frontmatter}


\section{Introduction}
In this work, we investigate the global-in-time existence of nonnegative strong solutions for a class of coupled time-space fractional parabolic reaction-diffusion systems
\begin{equation}
\label{Main_system}
\begin{cases}
\mathcal{D}^\rho_{0 \vert t}u+d_u (-\Delta)^{\sigma_1} u= (\alpha_2-\alpha_1)(k_f u^{\alpha_1}v^{\beta_1}-k_b u^{\alpha_2}v^{\beta_2}) &  \mbox{ in }\Omega\times(0,\infty), \\
\mathcal{D}^\rho_{0 \vert t}v+d_v (-\Delta)^{\sigma_2} v= (\beta_2-\beta_1)(k_f u^{\alpha_1}v^{\beta_1}-k_b u^{\alpha_2}v^{\beta_2}) &  \mbox{ in }\Omega\times(0,\infty),\\
\lambda u + (1-\lambda)\nabla{u} \cdot \nu=\lambda v + (1-\lambda)\nabla{v} \cdot \nu=0 &  \mbox{ on }\Gamma \times(0,\infty),\\
u(\cdot,0)=u_0,\ v(\cdot,0)=v_0 &   \mbox{ in } \Omega ,
\end{cases}
\end{equation}  
which coming out of the reversible chemical reaction
\begin{equation}
    \alpha_1 U+\beta_1 V\xrightleftharpoons[k_b]{k_f}\alpha_2 U+\beta_2 V,
\end{equation}  
where \(u:=u(x,t)\) and \(v:=v(x,t)\) denote the concentrations of the species \(U\) and \(V\), respectively. Here, \(\alpha_i, \beta_i \ge 0\) are the stoichiometric coefficients, \(k_f, k_b>0\) are the reaction rates, and \(d_u, d_v>0\) represent the diffusion coefficients. The domain \(\Omega \subset \mathbb{R}^N\) is a bounded smooth open set with \(\Gamma := \partial \Omega\) or \(\Gamma := (\mathbb{R}^N \setminus \Omega)\) , for \(1 \le N \in \mathbb{N}\). The time derivative operator $\mathcal{D}^\rho_{0 \vert t}$ is the Caputo fractional derivative of order $0< \rho \le 1$ (see \cite[Definition 2.1.1 and Remark 2.1.2]{gal2020fractional})
    \begin{equation}
		{\mathcal{D}}_{0 \vert t}^{\rho} u(x,t) := \frac{1}{\Gamma(1-\rho)} \int_{0}^{t} (t-\tau)^{- \rho} \partial_{\tau} u(x,\tau) \,d\tau,\quad x\in \Omega, \quad t>0.
    \end{equation}
    Furthermore, the spatial operator $(-\Delta)^\sigma$ (with $\sigma=\sigma_1$ or $\sigma=\sigma_2$) represents a spectral fractional Laplacian associated with either Dirichlet ($\lambda =1$) or Neumann ($\lambda =0$) homogeneous boundary conditions, which its definition given via the spectrum of the classical Laplacian operator $-\Delta$ (see \cite[Example 2.3.8]{gal2020fractional}). 
Let $\{\mu_k,e_k\}_{k\in\mathbb{N}}$ be  the eigenpairs of the Dirichlet or Neumann classical Laplacian. We also introduce the following fractional order Sobolev
space:
    \begin{equation}
		H^\sigma(\Omega):= \left\{w=\sum_{n=1}^{\infty} w_n e_n \in L^2(\Omega) \ :\  \|w\|_{H^\sigma(\Omega)}:=\sum_{n=1}^{\infty}\mu_n^\sigma w_n^2 < +\infty \right\},
    \end{equation}
    where $\sigma\in(0,1)$ and $w_n=\displaystyle\int_\Omega w(\mathbf{x}) e_n (x) \,dx$.
Then, the spectral fractional Laplacian is defined on the space $H^\sigma(\Omega)$ by
	\begin{equation}
    \label{FracLap_Spec}
		(-\Delta)^\sigma w(x):=\sum\limits_{n=1}^{\infty} \mu_{n}^\sigma w_{n}e_{n} (x),
	\end{equation}
    where its domain, for $\lambda \in \{0,1\}$, is given by
    \begin{equation}\label{Dom_Frac_Lap}
        \mathcal{D}\left((-\Delta)^\sigma\right)=\{w\in H^\sigma(\Omega) \ : \ \lambda w + (1-\lambda)\nabla{w} \cdot \nu=0\}.
    \end{equation}
    
Classical reaction–diffusion systems have been extensively used in various fields, including physics, biology, ecology, and chemistry (see \cite{Fife1979,Murray2001,Abdelmalek2019}). However, as research has advanced, it has become clear that these models are inadequate to fully capture the complexity of certain systems.\\
Let us mention that, a great attention has been devoted to the global existence of classical reaction-diffusion models (i.e. $\rho=\sigma_1=\sigma_2=1$) of the type described in system \eqref{Main_system} over the past 35 years by numerous authors, Notable contributions include those by Kirane \cite{Kirane1990}, Kouachi \cite{Kouachi2001,Kouachi2011}, Pierre \cite{Pierre2010}, Craciun \textit{et al.} \cite{Craciun2017}, and see also references therein.

On the other hand, diffusion processes originate from random molecular motion, and random walk models serve as fundamental frameworks for describing diffusion phenomena at the microscopic scale. From these models, one can derive deterministic fractional diffusion equations that accurately capture anomalous diffusion behavior. Moreover, since the past history of a system can influence its dynamic behavior, it is appropriate to incorporate these memory effects into the model through a fractional derivative with respect to time (see \cite{Caputo1999}; for more details, cf. \cite[Appendix C]{gal2020fractional}).\\
Consequently, it is interest to study reaction-diffusion systems that involve fractional derivatives with respect to both time and space, which can be expressed in terms of nonlocal operators (i.e., the support of a function is not preserved).

To the best of our knowledge, only a few studies have addressed the global existence of fractional time--space reaction--diffusion systems on bounded domain (see, e.g., Gal \textit{et al.} \cite{gal2020fractional}, Ahmad \textit{et al.} \cite{Ahmad2020}, Alsaedi \textit{et al.} \cite{Alsaedi2021}, Kirane \textit{et al.} \cite{Kirane2025}, and the references therein). More recently, Daoud \textit{et al.}~\cite{Daoud2024,Daoud2025} achieved significant advances in this context, for the case $\rho = 1$ (the classical time derivative) and for fractional Laplacians in both the spectral and Riesz formulations. However, their results do not cover the situation described by system~\eqref{Main_system}.

In contrast, the global-in-time existence of solutions to system~\eqref{Main_system} has not yet been established. In the present work, based on the local existence and positivity of solutions and under some assumptions on the system's parameters, we derive a~priori $L^\infty$-estimates using a suitable Lyapunov functional and an improved maximum principle. These estimates yield a global existence result for system~\eqref{Main_system}.
\subsection{Main result}
\begin{theorem}
\label{Main_Theorem}
    Let $0<\rho \leq 1$, $\sigma_1,\sigma_2\in (0,1)$, and $u_0,v_0\in C(\Bar{\Omega})$, with $0\leq u_0,v_0\leq \Lambda <+\infty$ for a.e. $x$ in $\Omega$, such that $u_0,v_0\not\equiv 0$. Moreover, let $\lambda\in \{0,1\}$, $(d_u,d_v,k_f,k_b)\in (0,+\infty)^4$, and $(\alpha_i,\beta_i)\in [0,+\infty)^2$ for $i=1,2$. Then, the system \eqref{Main_system} admits a unique nonnegative global strong solution, provided that one of the following assumptions is satisfied:
    \begin{enumerate}
  \renewcommand{\labelenumi}{(\roman{enumi})}
  \item $0<\alpha_1<\alpha_2$, $0<\beta_2<\beta_1$, $\alpha_1+\beta_1>\alpha_2+\beta_2$, and $k_f=k_b$.\label{Main_Theor_A1}
  \item $0<\alpha_2<\alpha_1$, $0<\beta_1<\beta_2$, $\alpha_1+\beta_1<\alpha_2+\beta_2$, and $k_f=k_b$.\label{Main_Theor_A2}
  \item $\alpha_1=\alpha_2$ whatever are $\beta_1$ and $\beta_2$.
  \item $\beta_1=\beta_2$ whatever are $\alpha_1$ and $\alpha_2$.
  \item \label{case5} $\rho=1$, $\sigma_1\leq \sigma_2$, $0<\alpha_1<\alpha_2$, $0<\beta_1<\beta_2$, and $\alpha_1+\beta_1\leq 1$.
  \item \label{case6} $\rho=1$, $\sigma_1\leq \sigma_2$, $0<\alpha_2<\alpha_1$, $0<\beta_2<\beta_1$, and $\alpha_2+\beta_2\leq 1$.
\end{enumerate}
\end{theorem}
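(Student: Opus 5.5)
We plan to follow the standard scheme: local existence plus nonnegativity, a priori $L^\infty$ bounds on every finite interval $(0,T)$, and a continuation argument. The first step is to invoke local well-posedness for \eqref{Main_system}. Since the nonlinearity $F(u,v)=k_f u^{\alpha_1}v^{\beta_1}-k_b u^{\alpha_2}v^{\beta_2}$ is locally Lipschitz on the positive cone (at least away from $0$; we would regularize near $0$ when some exponent is below $1$), the semigroup/resolvent theory for $\mathcal{D}^\rho_{0|t}+d(-\Delta)^\sigma$ in $C(\bar\Omega)$ (cf. \cite{gal2020fractional}) yields a unique strong solution on a maximal interval $[0,T_{\max})$. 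The solution satisfies the blow-up alternative $\limsup_{t\to T_{\max}}(\|u\|_\infty+\|v\|_\infty)=\infty$ if $T_{\max}<\infty$. Nonnegativity follows from quasi-positivity: $F$ times $(\alpha_2-\alpha_1)$ is $\ge0$ when $u=0$, because the term that survives has the right sign, and similarly for $v$. We would use the maximum principle for the Caputo derivative with spectral fractional Laplacian. Thus it suffices to bound $u,v$ in $L^\infty(\Omega\times(0,T))$ for every $T<T_{\max}$.

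Cases (iii) and (iv) are immediate. If $\alpha_1=\alpha_2$ the $u$-equation is linear and homogeneous, so the maximum principle gives $0\le u\le\Lambda$. The $v$-equation then reads $\mathcal{D}^\rho v+d_v(-\Delta)^{\sigma_2}v=(\beta_2-\beta_1)u^{\alpha_1}(k_fv^{\beta_1}-k_bv^{\beta_2})$. Its right-hand side is negative for large $v$ when the larger exponent carries the minus sign, and this is the case in both subcases $\beta_2>\beta_1$ and $\beta_2<\beta_1$. Comparing with a large constant $M\ge\max(\Lambda,(k_f/k_b)^{1/(\beta_2-\beta_1)})$ (or its analogue) yields $v\le M$; (iv) is symmetric.

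For (i)/(ii), with $k_f=k_b=k$, the source is $k\,u^{\alpha_1}v^{\beta_1}(1-u^{\alpha_2-\alpha_1}v^{\beta_2-\beta_1})$. In case (i) this reads $k\,u^{\alpha_1}v^{\beta_1}(1-u^{a}v^{-b})$ with $a=\alpha_2-\alpha_1>0$, $b=\beta_1-\beta_2>a$. We would use the "improved maximum principle" idea. Set $M=\max(\Lambda,1)$ and show $u\le M$ and $v\le M$ simultaneously by contradiction at a first touching point. The key point is that in the region $u\ge1,v\ge1$ both reaction terms can be controlled: the $u$-source $(\alpha_2-\alpha_1)F$ is $\le0$ whenever $u^a\ge v^b$, and the $v$-source $(\beta_2-\beta_1)F$ is $\le0$ whenever $u^a\le v^b$. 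However, a simple box is not invariant. We would therefore use instead a Lyapunov/entropy functional $L(t)=\int_\Omega \big(H(u)+\kappa H(v)\big)$ with $H$ convex, adapted to the ratio $(\alpha_2-\alpha_1)/(\beta_1-\beta_2)$, so that the reaction part is nonpositive. We combine this with the convexity inequality $\mathcal{D}^\rho H(u)\le H'(u)\mathcal{D}^\rho u$ and $\int H'(u)(-\Delta)^\sigma u\ge0$ (Córdoba–Córdoba type). This gives $L^p$ bounds for all $p$, which we would bootstrap to $L^\infty$ via the fractional smoothing estimates. The choice of weights so that $\alpha_1+\beta_1>\alpha_2+\beta_2$ makes the reaction contribution sign-definite is the step we expect to be the main obstacle.

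Cases (v)/(vi) with $\rho=1$ exploit the mass relation $(\beta_2-\beta_1)u_t-(\alpha_2-\alpha_1)v_t=$ diffusion terms, which makes a positive combination $w=(\beta_2-\beta_1)u+(\alpha_2-\alpha_1)v$ (coefficients of the same sign) satisfy a linear-source inequality. The growth $F\lesssim u^{\alpha_1}v^{\beta_1}\le C(1+u+v)$, from $\alpha_1+\beta_1\le1$, makes the source of the growing species sublinear. Then $L^1$ bounds plus a duality/Gronwall argument bound $u$ (using $\sigma_1\le\sigma_2$ to compare the fractional semigroups in the duality estimate), and the sublinear source allows comparison with the linear problem, giving $L^\infty$ bounds. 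Finally, the blow-up alternative gives $T_{\max}=\infty$, and uniqueness follows from local Lipschitz continuity.
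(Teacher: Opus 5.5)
\textbf{The gap: cases (i)--(ii).} These cases carry the main content of the theorem, and you never specify the functional. The ansatz you do write, $\int_\Omega\bigl(H(u)+\kappa H(v)\bigr)\,dx$ with a single convex $H$, cannot produce $L^p$ bounds. Set $\widehat{\alpha}=\alpha_2-\alpha_1$, $\widehat{\beta}=\beta_1-\beta_2$ and $k_f=k_b=k$. The reaction contribution of your functional is
\[
k\int_\Omega u^{\alpha_1}v^{\beta_2}\bigl(v^{\widehat{\beta}}-u^{\widehat{\alpha}}\bigr)\bigl(\widehat{\alpha}H'(u)-\kappa\widehat{\beta}H'(v)\bigr)\,dx .
\]
Pointwise nonpositivity (let $v$ cross $u^{\widehat{\alpha}/\widehat{\beta}}$) forces $\kappa\widehat{\beta}H'(u^{\widehat{\alpha}/\widehat{\beta}})=\widehat{\alpha}H'(u)$ for all $u>0$. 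Because $\widehat{\alpha}\neq\widehat{\beta}$, iterating this identity shows that $H'$ grows at most like a power of $\log u$; the model solution is the entropy $H(s)=s\log s-s$ with $\kappa=1$. So this route gives only $L\log L$-type control, which in general is not enough to start your smoothing bootstrap.

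The missing idea is to give the two species different exponents. The paper uses $\mathcal{L}(t)=\int_\Omega(\delta_1u^p+\delta_2v^q)\,dx$ with
\[
q-1=\frac{(p-1)\widehat{\beta}}{\widehat{\alpha}},\qquad \delta_1=\frac{1}{p\widehat{\alpha}},\qquad \delta_2=\frac{1}{q\widehat{\beta}} .
\]
For this choice the reaction part equals $-k\int_\Omega(v^{q-1}-u^{p-1})(v^{\widehat{\beta}}-u^{\widehat{\alpha}})u^{\alpha_1}v^{\beta_2}\,dx\le 0$. Indeed $v^{q-1}=(v^{\widehat{\beta}})^{(p-1)/\widehat{\alpha}}$ and $u^{p-1}=(u^{\widehat{\alpha}})^{(p-1)/\widehat{\alpha}}$ are the same increasing power of $v^{\widehat{\beta}}$ and $u^{\widehat{\alpha}}$. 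The convexity inequality for $\mathcal{D}^\rho_{0|t}$, Stroock--Varopoulos, and $I^\rho_{0|t}\mathcal{D}^\rho_{0|t}\mathcal{L}=\mathcal{L}-\mathcal{L}(0)$ then give $\mathcal{L}(t)\le\mathcal{L}(0)$. Hence $\|u(t)\|_{L^p(\Omega)}\le|\Omega|^{1/p}\bigl(\Lambda^p+\tfrac{p\widehat{\alpha}}{q\widehat{\beta}}\Lambda^q\bigr)^{1/p}$. Since $q/p\to\widehat{\beta}/\widehat{\alpha}$, letting $p\to\infty$ yields the $L^\infty$ bound directly, with no smoothing estimates; the same works for $v$, and case (ii) is symmetric.

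Sign-definiteness comes from $k_f=k_b$ and the coupling of $p$ and $q$, not from $\alpha_1+\beta_1>\alpha_2+\beta_2$ as you anticipated. The paper uses $\widehat{\beta}>\widehat{\alpha}$ only when bounding $\Lambda^{q/p}$.

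\textbf{The remaining cases} are workable but argued differently from the paper. In (iii)--(iv), your comparison with a large constant $M$ even gives a time-uniform bound. To make it rigorous, apply the convexity inequality to $\tfrac12(v-M)_+^2$ and use $\int_\Omega(v-M)_+(-\Delta)^{\sigma_2}v\,dx\ge 0$. The paper instead bounds the source by $C_v(1+v)$, compares with the Duhamel solution built from $\mathcal{P}_\rho$, and closes with the weakly singular Gronwall inequality.

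In (v)--(vi) the paper simply invokes the intermediate-sums theorem of Daoud et al.\ with the matrix $\mathcal{Q}$ and vector $\mathbf{b}$. Your justification there is confused. The mass relation cancels the reaction in $(\beta_2-\beta_1)u-(\alpha_2-\alpha_1)v$, whose coefficients have opposite signs in case (v), so it yields no positive combination. Moreover, with $d_u(-\Delta)^{\sigma_1}\neq d_v(-\Delta)^{\sigma_2}$ no combination obeys a closed equation anyway. What actually works is that both sources are positive multiples of $F\le k_fu^{\alpha_1}v^{\beta_1}\le k_f(1+u+v)$, using $\alpha_1+\beta_1\le 1$; in (vi) use $-F\le k_bu^{\alpha_2}v^{\beta_2}$ instead. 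Positivity and $L^\infty$-contractivity of the two semigroups, plus Gronwall on $\|u\|_{L^\infty}+\|v\|_{L^\infty}$, then close the estimate. This needs no duality argument and no use of $\sigma_1\le\sigma_2$.
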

\begin{remark}
\label{Rmrk_Main_Th_FracLap_Riez}
The same results as those presented in Theorem \ref{Main_Theorem} (with $\lambda=1$, $\Gamma = (\mathbb{R}^N \setminus \Omega)$, and $\sigma_1=\sigma_2$ in the cases (v)-(vi)) for system \eqref{Main_system} can be obtained by employing the operator $(-\Delta)^{\sigma}$, which denotes the fractional Laplacian introduced by M.~Riesz in~\cite{Riesz1938} and defined for any $\phi \in \mathscr{S}(\mathbb{R}^N)$ by (see \cite{Nezza2012}):
\begin{equation}\label{FracLap_Riez}
    (-\Delta)^{\sigma}\phi(x) := a_{N,\sigma}\, \mathrm{P.V.} \int_{\mathbb{R}^{N}} \frac{\phi(x)-\phi(y)}{\|x-y\|^{N+2\sigma}}\, dy, \qquad \sigma \in (0,1),
\end{equation}
where $\mathrm{P.V.}$ stands for the Cauchy principal value, $\|\cdot\|$ denotes the Euclidean norm in $\mathbb{R}^N$, and $a_{N,\sigma}$
is a normalization constant chosen so that the following identities hold:
\[
\lim_{\,\sigma \to 0^+} (-\Delta)^{\sigma}\phi = \phi
\quad \text{and} \quad
\lim_{\,\sigma \to 1^-} (-\Delta)^{\sigma}\phi = -\Delta \phi.
\]
\end{remark}
\section{Preliminaries}
	\subsection{Definitions}

	First, we recall some basic definitions from fractional calculus (see e.g. \cite{gal2020fractional,KilbasSrivastavaTrujillo}). Throughout the remainder, we consider $T>0$.
        \begin{definition}
            Let $f\in L^{1}([0,T])$.  The left Riemann-Liouville fractional integral of order $\rho>0$ is defined by
                \begin{align}
                        (I_{0|t}^\rho f)(t) &:= \frac{1}{\Gamma(\rho)} \int_0^t \frac{f(\tau) }{(t - \tau)^{1-\rho}} \, d\tau, 
                \end{align}
        \end{definition}
        
	
    \begin{definition}
        The \textit{two parameters Mittag-Leffler function} $E_{z_1,z_2}(z)$ is given by
            \begin{equation}
                E_{z_1,z_2}(z) = \sum_{k=0}^\infty \frac{z^k}{\Gamma(z_1 k +z_2)}, \quad z,z_1,z_2\in \mathbb{C};\ Re(z_1),Re(z_2)>0 .
            \end{equation}
    \end{definition}

    \begin{definition}\label{Def_P}
       Let $\rho\in (0,1)$. The operator $\mathcal{P}_\rho(t):H^\sigma(\Omega)\to H^\sigma(\Omega)$ is given by
        \begin{equation}
            \mathcal{P}_\rho(t) w:=\alpha t^{\rho-1} \int_0^{\infty} \tau \Phi_\rho(\tau) \mathcal{S}\left(\tau t^\rho\right) w \ d \tau,
        \end{equation}
where $\mathcal{S}(t)$ is  the strongly continuous semigroup generated by the operator $(-\Delta)^\sigma$ and $\Phi_\rho(z)$ is the Wright type function
    \begin{equation*}
        \Phi_\rho(z):=\sum_{n=0}^{\infty} \frac{(-z)^n}{n ! \Gamma(-\rho n+1-\rho)}, \quad z \in \mathbb{C}.
    \end{equation*}
    \end{definition}
    \subsection{Auxiliary Results}
	In this subsection, we present several lemmas that are essential for establishing the main result.
            \begin{lemma} \cite[Lemma 2.22]{KilbasSrivastavaTrujillo}\label{Lemma_frac_property}
			For a continuously differentiable function $f$, the following relationship between fractional integral and Caputo fractional derivative holds 
			\begin{equation}
					I_{0|t}^\alpha \mathcal{D}_{0|t}^{\alpha } f(t) = f(t) - f(0) \label{eqn J^aD^a}.
                \end{equation}
		\end{lemma}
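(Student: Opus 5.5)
The plan is to reduce the identity to the semigroup property of the Riemann--Liouville integrals. Write $\alpha\in(0,1]$ for the order; the case of the Caputo derivative used in the paper is $0<\alpha\le 1$.

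\textbf{The case $\alpha=1$.} Here $\mathcal{D}^1_{0|t}f=f'$ and $I^1_{0|t}$ is plain integration from $0$ to $t$. The claim is then exactly the fundamental theorem of calculus, $\int_0^t f'(\tau)\,d\tau=f(t)-f(0)$.

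\textbf{The case $0<\alpha<1$.} First rewrite the Caputo derivative, straight from its definition in the introduction, as a Riemann--Liouville integral of $f'$:
\[
\mathcal{D}^\alpha_{0|t}f(t)=\frac{1}{\Gamma(1-\alpha)}\int_0^t (t-\tau)^{-\alpha}f'(\tau)\,d\tau=(I^{1-\alpha}_{0|t}f')(t).
\]
Since $f\in C^1$, $f'$ is continuous on $[0,T]$, so this is a continuous function of $t$. Hence $I^\alpha_{0|t}$ may legitimately be applied to it.

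The key step is the semigroup law $I^\alpha_{0|t}I^{\beta}_{0|t}g=I^{\alpha+\beta}_{0|t}g$ for $g\in L^1(0,T)$, used with $\beta=1-\alpha$. To prove it:
\begin{itemize}
\item Write out the double integral
\[
\frac{1}{\Gamma(\alpha)\Gamma(\beta)}\int_0^t (t-s)^{\alpha-1}\int_0^s (s-\tau)^{\beta-1}g(\tau)\,d\tau\,ds .
\]
\item Interchange the order of integration by Fubini--Tonelli over the triangle $0<\tau<s<t$. This is justified because the kernels are nonnegative and integrable, and $g\in L^1$.
\item The inner integral becomes $\int_\tau^t (t-s)^{\alpha-1}(s-\tau)^{\beta-1}\,ds$. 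The substitution $s=\tau+\theta(t-\tau)$ turns it into
\[
(t-\tau)^{\alpha+\beta-1}B(\alpha,\beta),\qquad B(\alpha,\beta)=\frac{\Gamma(\alpha)\Gamma(\beta)}{\Gamma(\alpha+\beta)}.
\]
\end{itemize}
This yields $I^{\alpha+\beta}_{0|t}g$, as claimed.

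Applying the semigroup law with $\beta=1-\alpha$ gives
\[
I^\alpha_{0|t}\mathcal{D}^\alpha_{0|t}f=I^1_{0|t}f'=\int_0^t f'(\tau)\,d\tau=f(t)-f(0).
\]

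The only point needing care is the justification of Fubini in the semigroup step, which involves weakly singular kernels. Integrability of $(t-s)^{\alpha-1}(s-\tau)^{\beta-1}|g(\tau)|$ over the triangle follows from Tonelli together with the same Beta-function computation. Everything else is routine.
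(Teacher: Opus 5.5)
Your proof is correct and follows the standard route. The paper gives no proof of its own: it only cites Kilbas--Srivastava--Trujillo, Lemma 2.22, whose argument likewise writes the Caputo derivative as $I^{1-\alpha}_{0|t}f'$, uses the semigroup law $I^{\alpha}_{0|t}I^{1-\alpha}_{0|t}=I^{1}_{0|t}$, and concludes with the fundamental theorem of calculus. Your Fubini and Beta-function justification fully covers the semigroup step, since $\alpha+\beta=1$ reduces the integrability bound to $B(\alpha,1-\alpha)\int_0^t|f'(\tau)|\,d\tau<\infty$.
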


		\begin{lemma}\cite[Lemma 3]{Kirane2025}
        \label{Lemma_chain_convexD}
			If $\varphi \in C^1(\mathbb{R})$ is a convex function and $x\in C([0,T])\cap C^1((0,T])$, then 
				\begin{equation*}
					\mathcal{D}^\alpha_{0 \vert t} \varphi(x)(t) \le \varphi'(x(t))\mathcal{D}^{\alpha}_{0 \vert t} x(t),
					\qquad0<\alpha\le1.
				\end{equation*}
		\end{lemma}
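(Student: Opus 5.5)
The plan is to rewrite the difference of the two sides as one integral with a nonnegative integrand. For $\alpha=1$ the Caputo derivative is the ordinary derivative, and the classical chain rule gives equality, so I take $0<\alpha<1$ and fix $t\in(0,T]$. Set $\Gamma_\alpha:=\Gamma(1-\alpha)$. I will use the tangent-line remainder of $\varphi$ at $x(t)$, evaluated along the path $x(s)$:
\begin{equation*}
h(s):=\varphi(x(s))-\varphi(x(t))-\varphi'(x(t))\bigl(x(s)-x(t)\bigr),\qquad s\in[0,t].
\end{equation*}
Convexity of $\varphi\in C^1(\mathbb{R})$ gives $h(s)\ge 0$ on $[0,t]$, and clearly $h(t)=0$. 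On $(0,t]$ we have $h'(s)=\bigl(\varphi'(x(s))-\varphi'(x(t))\bigr)x'(s)$. Using the definition of the Caputo derivative, this gives
\begin{equation*}
\varphi'(x(t))\,\mathcal{D}^\alpha_{0|t}x(t)-\mathcal{D}^\alpha_{0|t}\varphi(x)(t)=-\frac{1}{\Gamma_\alpha}\int_0^t (t-s)^{-\alpha}h'(s)\,ds .
\end{equation*}
It therefore suffices to show that the right-hand side is nonnegative.

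The key step is an integration by parts that moves the derivative from $h$ onto the kernel. Because $x$ is only $C^1$ on $(0,T]$, I first integrate over $[\varepsilon,t-\delta]$. Since $\frac{d}{ds}(t-s)^{-\alpha}=\alpha(t-s)^{-\alpha-1}$, this yields
\begin{equation*}
-\int_\varepsilon^{t-\delta}(t-s)^{-\alpha}h'(s)\,ds=\varepsilon'^{\,-\alpha}h(\varepsilon)-\delta^{-\alpha}h(t-\delta)+\alpha\int_\varepsilon^{t-\delta}(t-s)^{-\alpha-1}h(s)\,ds,
\end{equation*}
where $\varepsilon':=t-\varepsilon$. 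Then I let $\delta\to0$ and $\varepsilon\to0$.

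The delicate point is the boundary term at $s=t$, which is where the singularity of the kernel sits. By the mean value theorem, $h(s)=\bigl(\varphi'(x(\xi_s))-\varphi'(x(t))\bigr)\bigl(x(s)-x(t)\bigr)$ for some $\xi_s$ between $s$ and $t$. The factor $x(s)-x(t)$ is $O(t-s)$ because $x\in C^1$ near $t$. The factor $\varphi'(x(\xi_s))-\varphi'(x(t))$ tends to $0$ by continuity of $\varphi'$ and $x$. Hence $h(s)=o(t-s)$, so $\delta^{-\alpha}h(t-\delta)\to0$ since $\alpha<1$. The same estimate shows that $(t-s)^{-\alpha-1}h(s)=o\bigl((t-s)^{-\alpha}\bigr)$ near $s=t$, so this integrand is integrable there. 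At the other end, continuity of $x$ at $0$ gives $t^{-\alpha}h(\varepsilon)\to t^{-\alpha}h(0)$. The left-hand integral converges to the full Caputo expression, which is implicitly assumed to exist for the left-hand side of the lemma to make sense. Monotone convergence applies to the nonnegative integral of $h$.

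Passing to the limit gives
\begin{equation*}
\varphi'(x(t))\,\mathcal{D}^\alpha_{0|t}x(t)-\mathcal{D}^\alpha_{0|t}\varphi(x)(t)=\frac{1}{\Gamma_\alpha}\Bigl[t^{-\alpha}h(0)+\alpha\int_0^t (t-s)^{-\alpha-1}h(s)\,ds\Bigr]\ \ge 0,
\end{equation*}
because $h\ge0$. This is the claimed inequality. I expect the only real obstacle to be the justification of the boundary behaviour at $s=t$ and $s=0$ under the weak regularity $x\in C([0,T])\cap C^1((0,T])$ and $\varphi\in C^1$. That is handled by the $o(t-s)$ bound and the $\varepsilon$-truncation above.
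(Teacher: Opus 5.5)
The paper gives no proof of this lemma and only cites Lemma 3 of Kirane et al.\ (2025). Your argument is correct and is essentially the standard proof behind that citation: the tangent-line remainder $h\ge 0$, then integration by parts onto the kernel to reach $\frac{1}{\Gamma(1-\alpha)}\bigl[t^{-\alpha}h(0)+\alpha\int_0^t (t-s)^{-\alpha-1}h(s)\,ds\bigr]\ge 0$, with the boundary behaviour at $s=t$ correctly controlled by $h(s)=o(t-s)$. As a minor point, you do not need to assume that the Caputo integrals exist: for $x\in C([0,T])\cap C^1((0,T])$ they exist automatically as improper integrals, by the same truncated integration by parts.
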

     We need the following result on weakly singular Gronwall's inequality.     
        \begin{lemma} \cite[Lemma 6.19]{diethelm2010analysis}
        \label{ineq_weakly_Gronwall}
            Let $\alpha,C\in \mathbb{R}_+$. If $\psi:[0,T]\to \mathbb{R}$ is a nonnegative continuous function that satisfies
                \begin{equation*}
                    \psi(t) \le \psi(0)+\frac{C}{\Gamma(\alpha)} \int_0^t (t-s)^{\alpha-1} \psi(s) \, ds,
                \end{equation*}
            for all $t\in [0,T]$, then
                \begin{equation*}
                    \psi(t) \le \psi(0) E_{\alpha,1}(Ct^\alpha),\quad \text{for}\quad 0\le t\le T.
                \end{equation*}
        \end{lemma}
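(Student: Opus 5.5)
The plan is to prove the bound by iterating the integral inequality and summing the resulting series, which is exactly the Mittag-Leffler function $E_{\alpha,1}$. Write $\psi_0:=\psi(0)$ and introduce the linear positive operator
\[
(A\phi)(t):=\frac{C}{\Gamma(\alpha)}\int_0^t (t-s)^{\alpha-1}\phi(s)\,ds = C\,(I^\alpha_{0|t}\phi)(t),
\]
acting on nonnegative continuous functions on $[0,T]$. Since $\alpha>0$ the kernel is integrable, so $A$ maps $C([0,T])$ into itself. Because the kernel is nonnegative, $A$ is monotone: $\phi_1\le\phi_2$ implies $A\phi_1\le A\phi_2$. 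In this notation the hypothesis reads $\psi\le \psi_0+A\psi$.

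First I will iterate this inequality using monotonicity. Induction gives, for every $n\ge1$,
\[
\psi\le \sum_{k=0}^{n-1}A^k\psi_0 + A^n\psi .
\]
By the semigroup property of Riemann--Liouville integrals, $I^{\alpha}I^{\beta}=I^{\alpha+\beta}$ (a Beta-function computation), so
\[
A^k 1 = C^k I^{k\alpha}1 = \frac{C^k t^{k\alpha}}{\Gamma(k\alpha+1)} .
\]
Hence the partial sum equals $\psi_0\sum_{k=0}^{n-1} (Ct^\alpha)^k/\Gamma(k\alpha+1)$, which is a truncation of $\psi_0E_{\alpha,1}(Ct^\alpha)$.

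The main step is to show that the remainder $A^n\psi$ tends to zero. With $M:=\max_{[0,T]}\psi<\infty$ (by continuity) and monotonicity again,
\[
0\le A^n\psi(t)\le M\,\frac{C^nT^{n\alpha}}{\Gamma(n\alpha+1)} .
\]
This bound tends to $0$ as $n\to\infty$, since Stirling's formula gives superexponential growth of $\Gamma(n\alpha+1)$; equivalently, these are the terms of the convergent series $E_{\alpha,1}(CT^\alpha)$, since $E_{\alpha,1}$ is entire. Letting $n\to\infty$ then yields $\psi(t)\le\psi_0E_{\alpha,1}(Ct^\alpha)$ on $[0,T]$.

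I expect the only delicate point to be the justification of the semigroup identity for the iterated kernels; it reduces to $\int_0^t (t-s)^{\alpha-1}s^{\beta-1}ds=B(\alpha,\beta)t^{\alpha+\beta-1}$. Nonnegativity of $\psi$ is used only to make $M$ a valid upper bound in the remainder estimate. The case $C=0$ is trivial.
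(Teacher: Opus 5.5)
Your proof is correct and complete. The paper gives no argument of its own for this lemma (it is simply quoted from Diethelm's monograph), and your approach is the standard proof of this weakly singular Gronwall inequality: iterate the monotone operator $C\,I^{\alpha}_{0|t}$ to produce the partial sums of $E_{\alpha,1}(Ct^\alpha)$, and kill the remainder with the bound $M C^n T^{n\alpha}/\Gamma(n\alpha+1)\to 0$.

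One small correction to your closing remark. Nonnegativity of $\psi$ is not actually needed anywhere. $M=\max_{[0,T]}\psi$ bounds $\psi$ from above whatever its sign, and $\limsup_{n}A^n\psi\le 0$ is already enough to pass to the limit.
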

        We recall the boundedness of the operator $\mathcal{P}_{\rho}$ (see Definition \ref{Def_P}) for $\rho \in (0,1)$.
\begin{lemma}\cite[Proposition 2.1.3]{gal2020fractional}\label{Lemma_P_bound}
    There exists $C>0$ such that for all $t>0$ and $w\in H^{\sigma}(\Omega)$, we have
    \begin{equation}
        \|t^{1-\rho}P_{\rho}(t)w\|_{H^{\sigma}(\Omega)} \le C\|w\|_{H^{\sigma}(\Omega)}.
    \end{equation}
\end{lemma}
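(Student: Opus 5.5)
The plan is to reduce the estimate to two facts: the semigroup $\mathcal{S}$ is contractive on $H^\sigma(\Omega)$, and the Wright function $\Phi_\rho$ is a nonnegative weight with a finite first moment. Throughout, the constant $\alpha$ in Definition \ref{Def_P} is read as $\rho$, the normalisation that makes $\mathcal{P}_\rho$ the resolvent family of $\mathcal{D}^\rho_{0|t}+(-\Delta)^\sigma$. Multiplying by $t^{1-\rho}$ removes the singular prefactor:
\[
t^{1-\rho}\mathcal{P}_\rho(t)w=\rho\int_0^\infty \tau\,\Phi_\rho(\tau)\,\mathcal{S}(\tau t^\rho)w\,d\tau .
\]
So the whole task is to bound this Bochner integral in $H^\sigma(\Omega)$ uniformly in $t>0$.

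\textbf{Step 1: the semigroup.} In the eigenbasis $\{e_n\}$ of \eqref{FracLap_Spec} the semigroup acts by
\[
\mathcal{S}(s)w=\sum_n e^{-\mu_n^\sigma s}w_n e_n ,
\]
with $\mu_n\ge 0$. Since the $H^\sigma$-norm is $\sum_n\mu_n^\sigma w_n^2$, each coefficient is multiplied by a factor in $[0,1]$. Hence
\[
\|\mathcal{S}(s)w\|_{H^\sigma(\Omega)}\le\|w\|_{H^\sigma(\Omega)}\quad\text{for all } s\ge0 .
\]
The same computation shows that $s\mapsto \mathcal{S}(s)w$ is continuous in $H^\sigma$, so the integrand is strongly measurable.

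\textbf{Step 2: the Wright function.} Next I need the standard properties of $\Phi_\rho$ for $\rho\in(0,1)$:
\[
\Phi_\rho(\tau)\ge 0 \ \text{for } \tau>0,\qquad \int_0^\infty \tau\,\Phi_\rho(\tau)\,d\tau=\frac{1}{\Gamma(1+\rho)} .
\]
More generally, $\int_0^\infty\tau^{k}\Phi_\rho(\tau)\,d\tau=\Gamma(1+k)/\Gamma(1+\rho k)$. The nonnegativity is the only delicate point. It follows from the fact that $\Phi_\rho$ is the density of a one-sided $\rho$-stable-type law; equivalently, its Laplace transform is $E_{\rho,1}(-z)$, which is completely monotone by Pollard's theorem. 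The moment formula comes from integrating the series termwise against $\tau^k$, or from that Laplace-transform identity. I expect this step to be the main obstacle if it is proved from scratch, so I would cite it from the Mainardi / Zhou literature on Wright functions rather than reprove it.

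\textbf{Step 3: conclusion.} Minkowski's inequality for Bochner integrals, combined with Steps 1 and 2, gives
\[
\|t^{1-\rho}\mathcal{P}_\rho(t)w\|_{H^\sigma}\le \rho\int_0^\infty\tau\Phi_\rho(\tau)\,\|\mathcal{S}(\tau t^\rho)w\|_{H^\sigma}\,d\tau\le\frac{\rho}{\Gamma(1+\rho)}\|w\|_{H^\sigma}=\frac{1}{\Gamma(\rho)}\|w\|_{H^\sigma}.
\]
This is the claim with $C=1/\Gamma(\rho)$, which is independent of $t$ and $w$.

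An equivalent check works coefficientwise. The same integral identity shows that $t^{1-\rho}\mathcal{P}_\rho(t)$ multiplies $w_n$ by $E_{\rho,\rho}(-\mu_n^\sigma t^\rho)$. The function $x\mapsto E_{\rho,\rho}(-x)$ is completely monotone on $[0,\infty)$, hence takes values in $[0,1/\Gamma(\rho)]$, and this again yields the bound with $C=1/\Gamma(\rho)$.
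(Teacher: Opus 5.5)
Your proof is correct. The paper does not prove this lemma; it quotes it from \cite[Proposition 2.1.3]{gal2020fractional}. The standard proof of that estimate is exactly your subordination argument: nonnegativity of $\Phi_\rho$, the first moment $\int_0^\infty \tau\Phi_\rho(\tau)\,d\tau = 1/\Gamma(1+\rho)$, and contractivity of $\mathcal{S}$. This gives $C=1/\Gamma(\rho)$. If the stray constant $\alpha$ in Definition~\ref{Def_P} were not $\rho$, the same argument gives $C=\alpha/\Gamma(1+\rho)$. Your write-up therefore makes the cited step self-contained, and the coefficientwise identity $t^{1-\rho}\mathcal{P}_\rho(t)e_n = E_{\rho,\rho}(-\mu_n^\sigma t^\rho)e_n$ is a good independent check.

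Two small technical corrections, neither a real gap.

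First, the expression $\sum_n \mu_n^\sigma w_n^2$ that the paper calls $\|w\|_{H^\sigma(\Omega)}$ is the \emph{square} of a seminorm. It is only a seminorm in the Neumann case, where $\mu_1=0$. So the triangle inequality for Bochner integrals in your Step 3 must be applied to a genuine norm, e.g.\ $\bigl(\sum_n (1+\mu_n^\sigma)w_n^2\bigr)^{1/2}$. Read literally with the paper's squared quantity, the inequality holds with $C=\Gamma(\rho)^{-2}$. Your multiplier version avoids the issue entirely: multipliers in $[0,1/\Gamma(\rho)]$ control every weighted $\ell^2$ norm of the coefficients.

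Second, the moment formula cannot be obtained by integrating the series of $\Phi_\rho$ termwise against $\tau^k$ on $(0,\infty)$, because each term $\tau^{n+k}$ is non-integrable there. The valid route is the one you mention second: differentiate $E_{\rho,1}(-s)=\int_0^\infty e^{-s\tau}\Phi_\rho(\tau)\,d\tau$ at $s=0$. This is justified by the faster-than-exponential decay of $\Phi_\rho$; a Mellin-transform computation also works.
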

The following inequalities will be useful in the sequel.
        \begin{lemma}\cite[Lemma 6.2]{Bonforte2022}
        \label{Stroock_Varopoulos}
            Let $\sigma\in (0,1)$ and $p> 1$. For \eqref{FracLap_Spec} and nonnegative $u\in L^p(\Omega)$ with $(-\Delta)^{\sigma}u \in L^p(\Omega)$, we have
\begin{align}\label{Lemma_Strook&Varopulous_ineq}
            \int_{\Omega} u^{p-1} (-\Delta)^{\sigma}u \ dx 
                    \ge \frac{4(p-1)}{p^2} \int_{\Omega} \left|(-\Delta)^{\frac{\sigma}{2}} u^{\frac{p}{2}}\right|^2 dx. 
                \end{align}
        \end{lemma}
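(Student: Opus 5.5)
The statement to prove is the Stroock--Varopoulos inequality of Lemma \ref{Stroock_Varopoulos} for the spectral fractional Laplacian \eqref{FracLap_Spec}. The plan is to use Bochner subordination to write $(-\Delta)^\sigma$ through the heat semigroup, which gives a nonnegative kernel representation of the associated quadratic form. Concretely, for $w$ in the domain,
\[
(-\Delta)^\sigma w=\frac{1}{|\Gamma(-\sigma)|}\int_0^\infty \bigl(w-e^{t\Delta}w\bigr)\,\frac{dt}{t^{1+\sigma}},
\]
which follows from the scalar identity $\mu^\sigma=\frac{1}{|\Gamma(-\sigma)|}\int_0^\infty(1-e^{-t\mu})t^{-1-\sigma}dt$ applied to each eigenpair $\{\mu_k,e_k\}$.

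The heat semigroup $e^{t\Delta}$, with Dirichlet or Neumann conditions, has a symmetric nonnegative kernel $H_t(x,y)$ with $\int_\Omega H_t(x,y)\,dy\le 1$. This yields the representation
\[
\int_\Omega f\,(-\Delta)^\sigma g\,dx=\frac12\iint_{\Omega\times\Omega}(f(x)-f(y))(g(x)-g(y))\,K(x,y)\,dx\,dy+\int_\Omega f g\,B(x)\,dx .
\]
Here $K(x,y)=\frac{1}{|\Gamma(-\sigma)|}\int_0^\infty H_t(x,y)t^{-1-\sigma}dt\ge0$ and $B(x)=\frac{1}{|\Gamma(-\sigma)|}\int_0^\infty(1-\int_\Omega H_t(x,y)dy)t^{-1-\sigma}dt\ge0$. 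In the Neumann case $B\equiv0$.

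Applying this representation twice gives the two sides of the inequality:
\begin{itemize}
\item With $f=u^{p-1}$ and $g=u$, it expresses $\int_\Omega u^{p-1}(-\Delta)^\sigma u\,dx$.
\item With $f=g=u^{p/2}$, it expresses $\|(-\Delta)^{\sigma/2}u^{p/2}\|_2^2=\int_\Omega u^{p/2}(-\Delta)^\sigma u^{p/2}\,dx$.
\end{itemize}

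The inequality then reduces to two pointwise comparisons for $a,b\ge0$. The first is the elementary inequality
\[
(a^{p-1}-b^{p-1})(a-b)\ge\frac{4(p-1)}{p^2}\bigl(a^{p/2}-b^{p/2}\bigr)^2 .
\]
To prove it, write $a^{p/2}-b^{p/2}=\frac p2\int_b^a s^{p/2-1}ds$ and apply Cauchy--Schwarz: $\bigl(\int_b^a s^{p/2-1}ds\bigr)^2\le (a-b)\int_b^a s^{p-2}ds=(a-b)\frac{a^{p-1}-b^{p-1}}{p-1}$. The second concerns the boundary term: $u^{p-1}\cdot u\,B=u^pB\ge \frac{4(p-1)}{p^2}u^pB$, since $\frac{4(p-1)}{p^2}\le1$. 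Integrating both comparisons against $K\ge0$ and $B\ge0$ gives \eqref{Lemma_Strook&Varopulous_ineq}.

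The main obstacle is justifying these manipulations under the weak hypotheses. One has to know that $u^{p/2}\in H^\sigma$, and that the double integrals converge and can be split as above rather than only as principal values. My first approach would be to prove the inequality for the truncated semigroup differences $\int_\Omega f\,(g-e^{t\Delta}g)\,dx$ at each fixed $t$, where everything is a bounded operator and the pointwise argument applies verbatim. I would then integrate in $t$ against $t^{-1-\sigma}$ using monotone convergence, since the integrands are nonnegative. This gives finiteness of the right-hand side, and hence $u^{p/2}\in H^\sigma$, as a byproduct. If needed, I would first approximate $u$ by bounded functions $\min(u,n)$, or by smooth functions, and pass to the limit with Fatou's lemma.
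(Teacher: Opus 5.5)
Your argument is correct, and it necessarily takes a different route from the paper's, because the paper gives no proof here. It imports the inequality from \cite[Lemma 6.2]{Bonforte2022}, and the Riesz version in the remark after the lemma from \cite[(B7)]{Bonforte2017}. You prove it from scratch:
\begin{enumerate}
\item subordination writes $(-\Delta)^\sigma$ as an average of $I-e^{t\Delta}$, which puts $\int_\Omega f\,(-\Delta)^\sigma g\,dx$ in the form ``symmetric jump kernel $K\ge 0$ plus killing term $B\ge 0$'';
\item the two-point inequality handles $K$ (your Cauchy--Schwarz proof is right, since $s^{p-2}$ is integrable at $0$ for $p>1$);
\item the bound $\frac{4(p-1)}{p^2}\le 1$ handles $B$.
\end{enumerate}
Proving the comparison at each fixed $t$ and only then integrating in $t$ is the right way to avoid principal values.

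What your route buys is a self-contained proof that treats Dirichlet and Neumann at once. Moreover, with $K=a_{N,\sigma}|x-y|^{-N-2\sigma}$ and $B(x)=a_{N,\sigma}\int_{\mathbb{R}^N\setminus\Omega}|x-y|^{-N-2\sigma}\,dy$, the same two comparisons give the Riesz case, so one argument replaces both citations. It also yields $u^{p/2}\in H^\sigma(\Omega)$ for free. What the citation buys is brevity and a ready-made functional setting. That setting is not a formality: for $p<2$ a function in $L^p(\Omega)$ need not lie in $L^2(\Omega)$, so the hypothesis $(-\Delta)^\sigma u\in L^p$ refers to the $L^p$ realization of the operator, not the series definition on $H^\sigma(\Omega)$.

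The one step that needs more than the word ``Fubini'' is the identification $\int_\Omega u^{p-1}(-\Delta)^\sigma u\,dx=\frac{1}{|\Gamma(-\sigma)|}\int_0^\infty \phi(t)\,t^{-1-\sigma}\,dt$, where $\phi(t):=\int_\Omega u^{p-1}(u-e^{t\Delta}u)\,dx\ge 0$. There are two obstacles:
\begin{enumerate}
\item for $u$ merely in the domain, the subordination integral need not converge absolutely at $t=0$;
\item the pairing is $L^p$--$L^{p'}$ rather than $L^2$, and for $p>2$, $u^{p-1}$ need not be in $L^2$.
\end{enumerate}
To fix this, cut off at $t\ge\epsilon$, where Fubini is harmless. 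The truncated operator has symbol $\mu^\sigma g(\epsilon\mu)$ with $g(\lambda)=\frac{1}{|\Gamma(-\sigma)|}\int_\lambda^\infty(1-e^{-s})s^{-1-\sigma}\,ds$. This $g$ is completely monotone with $g(0)=1$, so $g(\epsilon(-\Delta))$ is a probability average of the heat operators $e^{\epsilon r\Delta}$. Hence it is an $L^p$ contraction tending strongly to the identity, and the truncations converge to $(-\Delta)^\sigma u$ in $L^p$. Monotone convergence in $\epsilon$ for $\phi\ge 0$ then concludes.

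Your fallback of replacing $u$ by $\min(u,n)$ would not serve here, since it does not preserve $(-\Delta)^\sigma u\in L^p$. A regularization that commutes with $(-\Delta)^\sigma$ and preserves nonnegativity, such as $e^{\eta\Delta}u$, would work.
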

        \begin{remark}
        For the operator \eqref{FracLap_Riez}, the Stroock–Varopoulos inequality \eqref{Lemma_Strook&Varopulous_ineq} holds (see \cite[Formula (B7)]{Bonforte2017}).
        \end{remark}
\begin{lemma}
    \label{positiv_inequ_lemma}
Let $x, y \ge 0$, $r>0$ and $s>t>0$. Then
\[
(x^s - y^t)(x^{\frac{rs}{t}} - y^r) \ge 0.
\]
\end{lemma}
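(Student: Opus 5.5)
Since $x,y\ge 0$, the idea is to reduce the product to a comparison of powers through a monotone substitution. With $s>t>0$ and $r>0$, put $\theta := r/t>0$, so that the second factor is $x^{\theta s}-y^{\theta t}$. Writing $a:=x^s\ge 0$ and $b:=y^t\ge 0$, the second factor becomes $a^{\theta}-b^{\theta}$, and the claim reduces to
\[
(a-b)(a^{\theta}-b^{\theta})\ge 0 \qquad \text{for all } a,b\ge 0,\ \theta>0 .
\]

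The key step is the monotonicity of $z\mapsto z^{\theta}$ on $[0,\infty)$, which holds for every $\theta>0$ (with $0^\theta=0$). If $a\ge b$, then $a^\theta\ge b^\theta$, so both factors are nonnegative. If $a<b$, then $a^\theta<b^\theta$, so both factors are negative. In either case the product is nonnegative, and equality holds exactly when $a=b$, that is, when $x^s=y^t$.

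The only point that needs any care is the identity $x^{rs/t}=(x^s)^{r/t}$ for $x\ge 0$. For $x>0$ it is the usual law of exponents $x^{rs/t}=e^{(rs/t)\ln x}=(e^{s\ln x})^{r/t}$. For $x=0$ both sides vanish, since $rs/t>0$ and $r/t>0$. Likewise $y^r=(y^t)^{r/t}$ holds for $y\ge 0$. Nothing is a real obstacle here.

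The hypothesis $s>t$ is not used; $s,t>0$ suffices. The assumption is presumably stated only because of how the lemma is applied later, where one compares monomials $u^{\alpha_i}v^{\beta_i}$ arising from the reaction term.
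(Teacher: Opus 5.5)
Your proof is correct and is essentially the paper's argument: the paper substitutes $a=x^{s/t}$, $b=y$ to reduce to $(a^t-b^t)(a^r-b^r)\ge 0$, while you substitute $a=x^s$, $b=y^t$ to reduce to $(a-b)(a^{r/t}-b^{r/t})\ge 0$, and both reductions rest only on the monotonicity of power functions on $[0,\infty)$. Your observation that the hypothesis $s>t$ is never used is also right.
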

\begin{proof}
    The demonstration is straightforward by setting \( a := x^{s/t} \) and \( b := y \).
\end{proof}
We also have the following maximum principle for the time and space fractional heat equation.
    \begin{lemma}
    \label{Lemma_Maximum_Principle}
    Let $\rho\in (0,1]$, $\sigma \in (0,1)$, $d>0$, $\psi_0\in L^{\infty}(\Omega)$ such that $\psi_0\ge 0$, and $\psi$ be the solution to the following problem
    \begin{equation}
\begin{cases}
\mathcal{D}^\rho_{0 \vert t}\psi+d (-\Delta)^{\sigma} \psi \le 0 &  \mbox{ in }\Omega\times(0,T), \\
\lambda \psi + (1-\lambda)\nabla{\psi} \cdot \nu= 0 &  \mbox{ on }\Gamma\times(0,T),\\
\psi(\cdot,0)=\psi_0 &   \mbox{ in } \Omega ,
\end{cases}
\end{equation}
where the operator $(-\Delta)^{\sigma}$ reepresents either \eqref{FracLap_Spec} (with $\lambda\in \{0,1\}$ and $\Gamma=\partial \Omega$), or \eqref{FracLap_Riez} (with $\lambda=1$ and $\Gamma = (\mathbb{R}^N \setminus \Omega)$). Then, we have
\begin{equation}
    \|\psi(.,t)\|_{L^{\infty}(\Omega)} \le \|\psi_0\|_{L^{\infty}(\Omega)}, \quad \text{for } 0<t<T.
\end{equation}
    \end{lemma}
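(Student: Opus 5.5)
We prove Lemma \ref{Lemma_Maximum_Principle} by an $L^p$-energy argument followed by the limit $p\to\infty$, using the convex chain rule of Lemma \ref{Lemma_chain_convexD} and the Stroock--Varopoulos inequality of Lemma \ref{Stroock_Varopoulos}. Working with $|\psi|$ (or $\psi^+$) would need extra justification, so we first treat the natural setting where $\psi\ge 0$. This is the case in which the lemma is applied to nonnegative solutions of \eqref{Main_system}. In that case $\|\psi\|_{L^\infty}$ is controlled by $\sup\psi$.

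Fix $p\ge 2$ and set $\varphi(s)=s^p$, which is convex and $C^1$ on $[0,\infty)$. For a.e.\ $x$, Lemma \ref{Lemma_chain_convexD} gives
\[
\mathcal{D}^\rho_{0|t}\psi^p\le p\,\psi^{p-1}\mathcal{D}^\rho_{0|t}\psi\le -p\,d\,\psi^{p-1}(-\Delta)^\sigma\psi .
\]
We integrate over $\Omega$ and pass the Caputo derivative outside the integral, using linearity and enough regularity of the strong solution. With $\Psi_p(t):=\int_\Omega\psi^p(x,t)\,dx$, Lemma \ref{Stroock_Varopoulos} then yields
\[
\mathcal{D}^\rho_{0|t}\Psi_p(t)\le -\frac{4d(p-1)}{p}\int_\Omega\bigl|(-\Delta)^{\sigma/2}\psi^{p/2}\bigr|^2dx\le 0 .
\]
For the Riesz operator \eqref{FracLap_Riez} the same estimate follows from the remark after Lemma \ref{Stroock_Varopoulos}, since $\psi=0$ outside $\Omega$. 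Next we apply $I^\rho_{0|t}$ and use Lemma \ref{Lemma_frac_property} together with positivity of the Riemann--Liouville kernel. This gives $\Psi_p(t)\le\Psi_p(0)$, i.e.
\[
\|\psi(\cdot,t)\|_{L^p(\Omega)}\le\|\psi_0\|_{L^p(\Omega)}\le|\Omega|^{1/p}\|\psi_0\|_{L^\infty(\Omega)}.
\]
Letting $p\to\infty$ gives the claim, because $\Omega$ is bounded.

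If sign information on $\psi$ is not available, a sign-free alternative would run as follows. Apply the same argument to $w=(\psi-k)^+$ with $k=\|\psi_0\|_{L^\infty}$, using the convex function $\varphi(s)=((s-k)^+)^2$. One would then need the positivity
\[
\int_\Omega (\psi-k)^+(-\Delta)^\sigma\psi\ge 0 .
\]
This follows for the spectral operator from the Markovian (sub-Markovian) property of the semigroup $\mathcal{S}(t)$; here constants are handled by $(-\Delta)^\sigma k\ge 0$, and in the Neumann case $(-\Delta)^\sigma k=0$. For the Riesz operator it follows from the kernel representation. We would then conclude $w\equiv 0$, and symmetrically for $(-\psi-k)^+$ when needed.

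The main obstacle is regularity. Lemmas \ref{Lemma_frac_property} and \ref{Lemma_chain_convexD} require $t\mapsto\psi(x,t)$ (respectively $\Psi_p$) to be $C([0,T])\cap C^1((0,T])$, and Lemma \ref{Stroock_Varopoulos} requires $(-\Delta)^\sigma\psi\in L^p$. We would handle this by assuming the strong-solution regularity coming from the representation via $\mathcal{P}_\rho$ (Lemma \ref{Lemma_P_bound}). Where necessary, we would regularize by time mollification or spectral Galerkin truncation, derive the $L^p$ bound uniformly, and then pass to the limit.
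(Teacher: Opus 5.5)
Your $L^p$ argument is sound when $\psi\ge 0$, up to the regularity issues you flag, which the paper does not address either. Your truncation argument with $\varphi(s)=((s-k)^+)^2$, $k=\|\psi_0\|_{L^\infty(\Omega)}$, also correctly gives the one-sided bound $\psi(\cdot,t)\le\|\psi_0\|_{L^\infty(\Omega)}$ a.e. The positivity $\int_\Omega(\psi-k)^+(-\Delta)^\sigma\psi\,dx\ge 0$ you invoke does hold: via the sub-Markovian semigroup in the spectral case, and by symmetrizing the kernel in the Riesz case.

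\textbf{The failing step} is ``symmetrically for $(-\psi-k)^+$''. The function $-\psi$ satisfies the reversed inequality $\mathcal{D}^\rho_{0\vert t}(-\psi)+d(-\Delta)^\sigma(-\psi)\ge 0$. So the chain-rule estimate gives no upper bound on $\mathcal{D}^\rho_{0\vert t}\int_\Omega((-\psi-k)^+)^2\,dx$, and nothing bounds $\psi$ from below.

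No argument can supply that lower bound, because for sign-changing $\psi$ the two-sided estimate in the statement is false. Let $e_1>0$ be the first Dirichlet or Neumann eigenfunction, with eigenvalue $\mu_1\ge 0$. Take $\psi_0\equiv 0$ and $\psi(x,t)=-t\,e_1(x)$. Then $\mathcal{D}^\rho_{0\vert t}\psi+d(-\Delta)^\sigma\psi=-\bigl(t^{1-\rho}/\Gamma(2-\rho)+d\mu_1^\sigma t\bigr)e_1\le 0$, yet $\|\psi(\cdot,t)\|_{L^\infty(\Omega)}=t\|e_1\|_{L^\infty(\Omega)}>0=\|\psi_0\|_{L^\infty(\Omega)}$.

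Your reason for restricting to $\psi\ge 0$ is also inaccurate. In case (\textit{iii}) of the proof of Theorem \ref{Main_Theorem}, the lemma is applied to $\psi=v-\overline{v}$, which need not be nonnegative since $\overline{v}\ge 0$. It is not applied only to nonnegative solutions of \eqref{Main_system}. What that step actually uses is the one-sided bound $v-\overline{v}\le\|v_0\|_{L^\infty(\Omega)}$. Together with $v\ge 0$, this gives $0\le v\le\|v_0\|_{L^\infty(\Omega)}+\overline{v}$.

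The correct formulation is therefore $\psi\le\|\psi_0\|_{L^\infty(\Omega)}$ a.e.\ in general, with the norm bound only when additionally $\psi\ge 0$. Your truncation argument alone proves both, since for $\psi\ge 0$ the one-sided bound already gives the $L^\infty$ estimate. You should drop the symmetric claim.

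For comparison, the paper's proof is only a pointer to \cite[Theorem~3]{AAK2}, adapted to inequalities and to the Caputo derivative. It asserts the same, literally false, two-sided bound without addressing the sign issue. Your energy/truncation route is self-contained and parallels the Lyapunov-functional argument of cases (\textit{i})--(\textit{ii}).
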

    \begin{proof}
We follow the same lines as in \cite[Theorem~3]{AAK2}, with adaptations to account for the inequality case and the use of the Caputo fractional derivative instead of the Riemann–Liouville one, which affects the treatment of the initial condition.
    \end{proof}
Since the nonlinearities of system~\eqref{Main_system} are locally Lipschitz continuous and quasi-positive, the local existence and nonnegativity of strong solutions (see \cite[Definition~3.2.1]{gal2020fractional}) are well known (see e.g. \cite[Theorem~3.2.2, Subsection~3.6, Theorem~4.1.3]{gal2020fractional} and \cite[Sections~3 and~4]{Kirane2025}); for completeness, we recall their statement below.
\begin{proposition}\label{LocalExistence}
 	Assuming $u_0,v_0\ge 0$, with $u_0,v_0\not\equiv 0$, and $u_0,v_0\in C(\Bar{\Omega})$. Then, system \eqref{Main_system} possesses a local nonnegative strong solution on a maximal interval $[0,T_{\max})$. Moreover,
    \begin{equation}
    \label{Global_Ex_Carac}
        \text{if}\quad T_{\max}<+\infty, \quad\text{then} \lim_{t\nearrow T_{\max}}\left(\|u(\cdot,t)\|_{L^\infty(\Omega)}+\|v(\cdot,t)\|_{L^\infty(\Omega)}\right) =+\infty
    \end{equation}
\end{proposition}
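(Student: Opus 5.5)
The proof will follow the classical route for semilinear abstract Cauchy problems: mild formulation, a contraction argument on a short time interval, continuation to a maximal interval, and a separate argument for positivity. Write $f(u,v):=(\alpha_2-\alpha_1)(k_f u^{\alpha_1}v^{\beta_1}-k_b u^{\alpha_2}v^{\beta_2})$ and $g(u,v):=(\beta_2-\beta_1)(k_f u^{\alpha_1}v^{\beta_1}-k_b u^{\alpha_2}v^{\beta_2})$.

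\textbf{Step 1: truncated problem.} I first work with the truncated nonlinearities $\tilde f(u,v):=f(u^+,v^+)$ and $\tilde g(u,v):=g(u^+,v^+)$. If some exponent is below $1$, I additionally replace $s^{\gamma}$ by a locally Lipschitz regularization near $s=0$ and remove it after positivity is established.

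\textbf{Step 2: mild formulation.} Using the spectral decomposition (\ref{FracLap_Spec}), $\mathcal{D}^\rho_{0|t}w+d(-\Delta)^\sigma w=F$ has the mild representation
\begin{equation*}
w(t)=\mathcal{S}_\rho(t)w_0+\int_0^t \mathcal{P}_\rho(t-s)F(s)\,ds ,
\end{equation*}
where
\begin{equation*}
\mathcal{S}_\rho(t)e_n=E_{\rho,1}(-d\mu_n^\sigma t^\rho)e_n ,
\qquad
\mathcal{P}_\rho(t)e_n=t^{\rho-1}E_{\rho,\rho}(-d\mu_n^\sigma t^\rho)e_n .
\end{equation*}
This is the subordinated form in Definition \ref{Def_P}; for $\rho=1$ it reduces to the semigroup formula.

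\textbf{Step 3: local solution.} Both operator families are positivity preserving and contractive on $L^\infty(\Omega)$. This follows from the Wright-function subordination to the sub-Markovian semigroup $\mathcal{S}(t)$ generated by $(-\Delta)^\sigma$, together with $\Phi_\rho\ge0$ and $\int_0^\infty\Phi_\rho=1$. Consequently,
\begin{equation*}
\Big\|\int_0^t\mathcal{P}_\rho(t-s)F(s)\,ds\Big\|_\infty\le \frac{t^\rho}{\Gamma(\rho+1)}\sup_{s\le t}\|F(s)\|_\infty .
\end{equation*}
Fix $R>2\Lambda$ and let $K_R$ be the Lipschitz constant of $(\tilde f,\tilde g)$ on $[-R,R]^2$. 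The mild-solution map is then a contraction on the ball of radius $R$ in $C([0,T];C(\bar\Omega))^2$ once $T^\rho K_R/\Gamma(\rho+1)<1/2$. Banach's fixed point theorem gives a unique local mild solution. Strong regularity (in the sense of \cite[Definition~3.2.1]{gal2020fractional}) follows from the smoothing estimates of \cite{gal2020fractional}, including Lemma \ref{Lemma_P_bound}, applied with the now bounded right-hand side.

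\textbf{Step 4: continuation and blow-up alternative.} The existence time $T$ depends only on an $L^\infty$ bound of the data, so the solution can be restarted and extended. For $\rho<1$ the history must be carried along: the restarted problem is a Volterra equation with a known, bounded memory term, and the same fixed-point argument on $[T,T+\delta]$ works with $\delta$ depending only on the sup-norm bound. Taking the union gives a maximal interval $[0,T_{\max})$. If the $L^\infty$ norms stayed bounded along a sequence $t\nearrow T_{\max}<\infty$, a uniform $\delta$ would extend the solution beyond $T_{\max}$, a contradiction; this yields (\ref{Global_Ex_Carac}). To obtain the full limit rather than a $\limsup$, I use that the local time depends only on $\sup_{[0,t]}$ norms.

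\textbf{Step 5: nonnegativity.} The truncated nonlinearity is quasi-positive: $\tilde f(0,v)\ge0$ and $\tilde g(u,0)\ge0$. Indeed, if $\alpha_1,\alpha_2>0$ both terms vanish at $u=0$, while if, say, $\alpha_1=0<\alpha_2$ then $\tilde f(0,v)=\alpha_2k_f(v^+)^{\beta_1}\ge0$; the remaining cases are analogous. Apply Lemma \ref{Lemma_chain_convexD} with the convex $C^1$ function $\varphi(s)=(s^-)^2$, and integrate over $\Omega$. The diffusion term contributes $\int(-u^-)(-\Delta)^\sigma u\ge0$ by the Markov property of the spectral operator. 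Quasi-positivity together with the Lipschitz bound gives $-u^-\tilde f(u,v)\le K_R (u^-)^2$, and similarly for $v$. This yields
\begin{equation*}
\mathcal{D}^\rho_{0|t}\big(\|u^-\|_2^2+\|v^-\|_2^2\big)\le C\big(\|u^-\|_2^2+\|v^-\|_2^2\big).
\end{equation*}
Applying $I^\rho_{0|t}$ (Lemma \ref{Lemma_frac_property}) and the weakly singular Gronwall inequality (Lemma \ref{ineq_weakly_Gronwall}) with zero initial value gives $u^-=v^-=0$. Hence the truncated solution solves (\ref{Main_system}).

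\textbf{Expected obstacle.} I expect the main difficulty in Step 4: continuation for $\rho<1$ is not a semigroup restart, because of the memory term. A secondary issue is local Lipschitz failure at $0$ for exponents in $(0,1)$. For this I would use the regularization from Step 1, pass to the limit using uniform $L^\infty$ bounds on a fixed short interval, and obtain uniqueness from positivity and a lower-bound argument; alternatively, I would simply invoke \cite[Theorem~3.2.2]{gal2020fractional} as the paper does.
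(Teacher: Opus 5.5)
The paper does not prove this proposition. It only cites \cite{gal2020fractional} and \cite{Kirane2025}, asserting that the nonlinearities are locally Lipschitz and quasi-positive. Your Steps 2, 3 and 5 are a sound self-contained version of that standard route: subordinated Mittag-Leffler families, a fixed point in $L^\infty$, and positivity via $\varphi(s)=(s^-)^2$, Lemma~\ref{Lemma_chain_convexD} and Lemma~\ref{ineq_weakly_Gronwall}.

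The genuine gap is the last part of Step 4. For $\rho<1$, restarting at $t_0$ means solving $w(t)=h(t)+\int_{t_0}^t\mathcal{P}_\rho(t-s)F(w(s))\,ds$ with history term $h(t)=\mathcal{S}_\rho(t)w_0+\int_0^{t_0}\mathcal{P}_\rho(t-s)F(w(s))\,ds$. For $t>t_0$, the size of $h(t)-w(t_0)$ is controlled by $\sup_{[0,t_0]}\|w\|_{L^\infty(\Omega)}$, not by $\|w(t_0)\|_{L^\infty(\Omega)}$. The reason is that $\mathcal{P}_\rho(t-s)-\mathcal{P}_\rho(t_0-s)$ is not small for $s$ close to $t_0$, where $F(w(s))$ may be large. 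Your argument therefore only proves that $T_{\max}<\infty$ forces $\sup_{[0,T_{\max})}\bigl(\|u(\cdot,t)\|_{L^\infty(\Omega)}+\|v(\cdot,t)\|_{L^\infty(\Omega)}\bigr)=+\infty$, i.e.\ a $\limsup$.

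Your sentence ``if the norms stayed bounded along a sequence $t\nearrow T_{\max}$, a uniform $\delta$ would extend the solution'' is exactly what fails. Along a sequence with $\|w(t_n)\|_{L^\infty(\Omega)}\le M$ but $\sup_{[0,t_n]}\|w\|_{L^\infty(\Omega)}\to\infty$, the restart times shrink to zero. Dependence of the local time on $\sup_{[0,t]}$ is the obstruction to the full limit, not the way to obtain it. The upgrade from $\limsup$ to $\lim$ works only for $\rho=1$, where the restart depends on the current state alone. You should either state and prove the $\limsup$ form, or find a genuinely different argument for the limit. The $\limsup$ form is all the subsequent Remark uses, since a continuous bound $\mathcal{F}$ on $[0,T_{\max})$ contradicts it when $T_{\max}<\infty$.

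There are also some smaller points.
\begin{itemize}
\item When some exponent lies in $(0,1)$ (forced in case (v), where $\alpha_1+\beta_1\le 1$), the nonlinearity is not locally Lipschitz, so the paper's own justification does not literally apply either. You are right to flag this, but your remedy is only a sketch. Removing the regularization requires a time-compactness argument for the Caputo problem.
\item Uniqueness ``from positivity and a lower bound'' is unavailable under $u_0,v_0\ge 0$, $u_0,v_0\not\equiv 0$. There is no positive lower bound, since the data may vanish somewhere, and for $\lambda=1$ the solution vanishes on $\Gamma$. Without uniqueness, the maximal interval must be defined by other means.
\item For $\lambda=1$, the space $C([0,T];C(\bar\Omega))$ is inappropriate: $\mathcal{S}_\rho(t)u_0$ does not converge uniformly to $u_0$ when $u_0\neq 0$ on $\partial\Omega$. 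Run the fixed point in $L^\infty(\Omega\times(0,T))$ instead.
\item $\mathcal{P}_\rho(t)$ is not contractive on $L^\infty(\Omega)$. It satisfies $\|\mathcal{P}_\rho(t)\|_{L^\infty\to L^\infty}\le t^{\rho-1}/\Gamma(\rho)$, which is exactly what your convolution estimate uses.
\end{itemize}
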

\begin{remark}
According to the contrapositive of the statement~\eqref{Global_Ex_Carac}, to establish the global existence of strong solution to system~\eqref{Main_system}, it is sufficient to derive an a priori estimate of the form
\begin{equation}\label{GlobalExist_Estimate}
 \forall t\in [0,T_{\max}), \quad \|u(\cdot,t)\|_{L^\infty(\Omega)}+\|v(\cdot,t)\|_{L^\infty(\Omega)} <\mathcal{F}(t),  
 \end{equation}
 where $\mathcal{F} :[0,+\infty)\rightarrow [0,+\infty)$ is a continuous function.
\end{remark}
\section{Proof of the main result}
\begin{proof}[Proof of Theorem \ref{Main_Theorem}] Let $p,q>1$ and $\delta_1,\delta_2>0$. Consider the following Lyapunov functional
\begin{equation}
    \mathcal{L}(t)=\int_{\Omega} (\delta_1 u^p + \delta_2 v^q) \,dx.
\end{equation}
Then, by using Lemma \ref{Lemma_chain_convexD}, we get
\begin{eqnarray*}
\mathcal{D}^\rho_{0 \vert t}\mathcal{L}(t)
&\leq& -\int_{\Omega} \left( \delta_1 d_u p u^{p-1} (-\Delta)^{\sigma_1} u + \delta_2 d_v q v^{q-1} (-\Delta)^{\sigma_2} v\right) \,dx \\
&&+\int_{\Omega} \left[\left(\delta_1 p (\alpha_2-\alpha_1) u^{p-1}  +\delta_2 q (\beta_2-\beta_1) v^{q-1}  \right)(k_f u^{\alpha_1}v^{\beta_1}-k_b u^{\alpha_2}v^{\beta_2}) \right] \,dx \\
&=:&\mathcal{I}+\mathcal{J}.
\end{eqnarray*}%
Based on Stroock-Varopoulos inequality, Lemma \ref{Stroock_Varopoulos}, we obtain
\begin{equation}
    \mathcal{I}\leq -\int_{\Omega} \left(  \frac{4\delta_1 d_u (p-1)}{p}  \left|\displaystyle(-\Delta)^{\frac{\sigma_1}{2}} u^{\frac{p}{2}}\right|^2 + \frac{4\delta_2 d_v (q-1)}{q}  \left|\displaystyle(-\Delta)^{\frac{\sigma_2}{2}} v^{\frac{q}{2}}\right|^2  \right) \,dx \leq 0.
\end{equation}
On the other hand, we shall consider the two assumption cases.\\
(\textit{i}) If $0<\alpha_1<\alpha_2$, $0<\beta_2<\beta_1$, and $\alpha_1+\beta_1>\alpha_2+\beta_2$. Set $\widehat{\alpha}=\alpha_2-\alpha_1$, $\widehat{\beta}=\beta_1-\beta_2$, $k_f=k_b=k$, we moreover consider $\delta_1=(p\widehat{\alpha})^{-1}$ and $\delta_2=(q\widehat{\beta})^{-1}$. Therefore, we can let $p > 1$ be sufficient large to render $q = \frac{(p-1)\widehat{\beta}}{\widehat{\alpha}}+1 > 1$, and due to Lemma \ref{positiv_inequ_lemma}, we get
\begin{equation}
\mathcal{J} = -\int_{\Omega} \left[k\left(v^{q-1}-u^{p-1}  \right)( v^{\widehat{\beta}}- u^{\widehat{\alpha}})u^{\alpha_1}v^{\beta_2}  \right] \,dx \leq 0.
\end{equation}
(\textit{ii}) If $0<\alpha_2<\alpha_1$, $0<\beta_1<\beta_2$, and $\alpha_1+\beta_1<\alpha_2+\beta_2$. Set $\widetilde{\alpha}=\alpha_1-\alpha_2$, $\widetilde{\beta}=\beta_2-\beta_1$, $k_f=k_b=k$, we further consider $\delta_1=(p\widetilde{\alpha})^{-1}$ and $\delta_2=(q\widetilde{\beta})^{-1}$. Therefore, we can let $p > 1$ be sufficient large to render $q = \frac{(p-1)\widetilde{\beta}}{\widetilde{\alpha}}+1 > 1$, and due to Lemma \ref{positiv_inequ_lemma}, we get
\begin{equation}
\mathcal{J} = -\int_{\Omega} \left[k\left(v^{q-1}-u^{p-1}  \right)( v^{\widetilde{\beta}}- u^{\widetilde{\alpha}})u^{\alpha_2}v^{\beta_1}  \right] \,dx \leq 0.
\end{equation}%
Accordingly, in either cases (\textit{i}) or (\textit{ii}), we have
\begin{equation}
\label{DrhoL_Negative}
    \mathcal{D}^\rho_{0 \vert t}\mathcal{L}(t) \leq 0,\quad \text{for all}\quad t\in (0,T_{max}).
\end{equation}
Hence, by using Lemma \ref{Lemma_frac_property}, we deduce that
\begin{equation}
\label{L_UpperBound}
    \mathcal{L}(t) \leq \mathcal{L}(0) \leq \left|\Omega \right| (\delta_1 \Lambda^p+\delta_2 \Lambda^q),\quad \text{for all}\quad t\in (0,T_{max}).
\end{equation}
Now, we deal just with the case (\textit{i}) (and the argument for case (\textit{ii}) is similar). From \eqref{L_UpperBound}, we have
\begin{equation}
    \int_{\Omega} \delta_1 u^p \,dx \leq  \mathcal{L}(0) \leq \left|\Omega \right| (\delta_1 \Lambda^p+\delta_2 \Lambda^q),\quad \text{for all}\quad t\in (0,T_{max}).
\end{equation}
Then, since $q-1 = \frac{(p-1)\widehat{\beta}}{\widehat{\alpha}}$ implies $\frac{q}{p}< \frac{\hat{\beta}}{\hat{\alpha}}$, we get
\begin{equation}
    \| u(.,t)\|_{L^p(\Omega)} \leq \left|\Omega \right|^{\frac{1}{p}} \left( \Lambda^p+\frac{p\widehat{\alpha}}{q\widehat{\beta}} \Lambda^q\right)^{\frac{1}{p}}\leq \left|\Omega \right|^{\frac{1}{p}}\max(2,2\Lambda^{\frac{\hat{\beta}}{\hat{\alpha}}}),\quad \text{for all}\quad t\in (0,T_{max}).
\end{equation}
Therefore, by letting $p \to \infty$, we obtain
\begin{equation}
\label{c1_Bound_u}
    \| u(.,t)\|_{L^{\infty}(\Omega)} \leq \Lambda_u:=\max(2,2\Lambda^{\frac{\hat{\beta}}{\hat{\alpha}}}),\quad \text{for all}\quad t\in (0,T_{max}).
\end{equation}
Similarly, by letting $q \to \infty$, it follows that
\begin{equation}
\label{c1_Bound_v}
    \| v(.,t)\|_{L^{\infty}(\Omega)} \leq \Lambda_v:=\left(2\left(\frac{\widehat{\beta}}{\widehat{\alpha}}\right)^2+1\right) \max (1, \Lambda),\quad \text{for all}\quad t\in (0,T_{max}).
\end{equation}
Thus, from \eqref{c1_Bound_u} and \eqref{c1_Bound_v} we get the global existence of \eqref{Main_system}.\\
(\textit{iii}) If $\alpha_1 = \alpha_2$, regardless of the values of $\beta_1$ and $\beta_2$. Let $T\in (0,T_{max})$. The first equation of \eqref{Main_system}, for its part, yields
\[
\|u(.,t)\|_{L^\infty(\Omega)} \leq \|u_0\|_{L^\infty(\Omega)},\quad \text{for all}\quad t\in (0,T).
\]
On the other hand, without loss of generality, we assume $\beta_2 > \beta_1$. Thus, the second equation of \eqref{Main_system} yields
\begin{equation}
\label{v_boundEq}
    \mathcal{D}^\alpha_{0 \vert t}v+d_v (-\Delta)^{\sigma_2} v\leq C_v(1+v) \quad  \mbox{ in }\Omega\times(0,T).
\end{equation}
where $C_v:=C_v(\Gamma,\beta_1,\beta_2,k_f,k_b)>0$.
Let $\overline{v}$ be the solution of the following problem
\begin{equation}
\label{vBar_system}
\begin{cases}
\mathcal{D}^\alpha_{0 \vert t}\overline{v}+d_v (-\Delta)^{\sigma_2} \overline{v}= C_v (1+v) &  \mbox{ in }\Omega\times(0,T). \\
\lambda \overline{v} + (1-\lambda)\nabla{\overline{v}} \cdot \nu=0 &  \mbox{ on }\Gamma\times(0,T).\\
\overline{v}(\cdot,0)=0 &   \mbox{ in } \Omega .
\end{cases}
\end{equation}
Then, the solution $\overline{v}$ of \eqref{vBar_system}, satisfies
\begin{equation}
    \overline{v}(x,t)=C_v\int_0^t \mathcal{P}_{\alpha} (t-s) \left[ 1+v(x,s)\right] \,ds
\end{equation}
By subtracting the first equation of \eqref{vBar_system} from \eqref{v_boundEq}, we obtain
\begin{equation}
    \mathcal{D}^\alpha_{0 \vert t}(v-\overline{v})+d_v (-\Delta)^{\sigma_2} (v-\overline{v})\leq 0, \quad  \mbox{ in }\Omega\times(0,T).
\end{equation}
Thanks to Lemma \ref{Lemma_Maximum_Principle}, the second triangular inequality, and Lemma \ref{Lemma_P_bound}, we derive
\begin{equation}
    \|v(.,t)\|_{L^\infty(\Omega)} \leq \|v_0\|_{L^\infty(\Omega)}+ C_v \int_0^t (t-s)^{\alpha-1} 
\Bigl[ 1 + \|v(.,s)\|_{L^\infty(\Omega)} \Bigr] ds,\quad \text{for all}\quad t\in (0,T).
\end{equation}
By applying weakly singular Gronwall’s inequality in Lemma \ref{ineq_weakly_Gronwall}, we obtain
\[
\|v(.,t)\|_{L^\infty(\Omega)} \leq \widetilde{\mathcal{F}}(t),\quad \text{for all}\quad t\in (0,T),
\]
where $\widetilde{\mathcal{F}}:[0,+\infty)\to[0,+\infty)$ is a continuous function.
This implies $T_{\max} = +\infty$.\\
(\textit{iv}) This case can be handled similarly to (\textit{iii}).\\
(\textit{v}) This case follows directly from \cite[Theorem 3.2]{Daoud2025} (for the case mentioned in Remark \ref{Rmrk_Main_Th_FracLap_Riez}, see \cite[Theorem 4.2]{Daoud2024}), obtained by applying Young’s inequality and choosing
\begin{equation}
    \mathcal{Q} =
    \begin{pmatrix}
        1 & 0 \\
        1 & 1
    \end{pmatrix},\quad
    \text{and} \quad 
    \textbf{b} =
    \begin{pmatrix}
        k_f(\alpha_2-\alpha_1) \\
        k_f\left( \alpha_2-\alpha_1+\beta_2-\beta_1\right)
    \end{pmatrix}.
\end{equation}
(\textit{vi}) The argument for this case is analogous to (\textit{v}).\\
\end{proof}
\end{document}